\documentclass[a4paper,11pt]{amsart}

\usepackage{amsmath, amsthm, amsfonts, amssymb}
\usepackage[bookmarks=false]{hyperref}
\usepackage{enumerate}

\usepackage{color}

\numberwithin{equation}{section}

\setcounter{tocdepth}{1}

\newtheorem*{mainthm}{Main theorem}

\newtheorem{thm}{Theorem}[section]
\newtheorem{lem}[thm]{Lemma}

\newtheorem{cor}[thm]{Corollary}
\newtheorem{prop}[thm]{Proposition}

\theoremstyle{definition}
\newtheorem{rem}[thm]{Remark}

\newtheorem{example}[thm]{Example}

\newtheorem{df}[thm]{Definition}

\newcommand{\C}{\mathbf{C}}
\newcommand{\Z}{\mathbf{Z}}

\newcommand{\B}{\mathbf{B}}

\newcommand{\Ad}{\operatorname{Ad}}

\newcommand{\III}{{\rm III}}

\begin{document}

\title[On the weak relative Dixmier property]{On the weak relative Dixmier property}

\address{Research Institute for Mathematical Sciences, Kyoto University, Kyoto, 606-8502 Japan}

\author{Amine Marrakchi}
\email{amine.marrakchi@math.u-psud.fr}

\thanks{The author is supported by JSPS}

\subjclass[2010]{46L10, 46L36, 46L40, 46L55}


\begin{abstract}
We show that every inclusion of von Neumann algebras with a faithful normal conditional expectation has the weak relative Dixmier property. This answers a question of Popa \cite{Po99}. The proof uses an improvement of Ellis' lemma for compact convex semigroup.
\end{abstract}

\maketitle






\section{Introduction}

Let $N \subset M$ be an inclusion of von Neumann algebras. Following \cite[Definition 1.1]{Po99}, we say that the inclusion $N \subset M$ has the \emph{weak relative Dixmier property} if for every $x \in M$, the weak$^*$ closed convex hull of $\{ uxu^* \mid u \in \mathcal{U}(N) \}$ intersects $N' \cap M$. A classical result of Dixmier asserts that this property is always satisfied when $N=M$.  However, as explained in \cite{HP17}, if one does not make any assumption on the inclusion $N \subset M$, then the weak relative Dixmier property is not necessarily satisfied and can be very subtle as demonstrated by the following two examples:
 \begin{itemize}
 \item The inclusion $M \subset \B(H)$ has the weak relative Dixmier property if and only if $M$ is AFD. The if direction was first noticed by Schwartz \cite{Sc63} while the converse follows from Connes' celebrated work \cite{Co75}. 
 \item A type $\III_1$ factor $M$ has a trivial bicentralizer if and only if the inclusion $N \subset M$ has the weak relative Dixmier property, where $N$ is the \emph{continuous core} of $M$. This equivalence is due to Haagerup \cite{Ha85}. It is still a deep open problem to determine whether every type $\III_1$ factor has a trivial bicentralizer.
 \end{itemize}

On the other hand, it is an elementary fact \cite[Lemma 1.2]{Po99} that an inclusion $N \subset M$ has the weak relative Dixmier property if $M$ is tracial or more generally if there exists a faithful normal state $\varphi$ on $M$ such that $N \subset M_\varphi :=\{a \in M \mid \forall x \in M, \:  \varphi(a x)=\varphi( x a) \}$. This kind of averaging argument is actually a fundamental tool in von Neumann algebra theory. For example, it plays a key role in \cite{Po81} or in Popa's intertwining theory \cite{Po01, Po03}. In \cite[Remark 1.5]{Po99} (see also \cite[Problem 7]{HP17}), it was suggested that the weak relative Dixmier property might more generally hold for any inclusion \emph{with expectation} (this would in particular cover \cite[Corollary 1.4]{Po99}). Our main theorem solves this question.

\begin{mainthm} \label{main}
Let $N \subset M$ be an inclusion of von Neumann algebras. Suppose that there exists a faithful normal state $\varphi$ on $M$ such that $N$ is the range of a $\varphi$-preserving conditional expectation $E : M \rightarrow N$. Then the inclusion $N \subset M$ has the weak relative Dixmier property.
\end{mainthm}
The difficulty in the proof of this theorem arises only when $N$ is of type $\III_1$. Indeed, in all other cases, one can use discrete decompositions in order to reduce the problem to the tracial case. To deal with the remaining type $\III_1$ case, we use a recent result of the author \cite{Ma18} asserting the existence of an irreducible AFD subfactor with expectation inside $N$. We combine it with a new averaging argument for compact convex semigroups based on Ellis' lemma and inspired by Sinclair's proof of the existence of the injective enveloppe \cite{Si15}.

\section{Compact convex semigroups}
\begin{df}
A \emph{compact semigroup} $K$ is a non-empty compact space equipped with a semigroup operation such that $K \ni a \mapsto ab \in K$ is continuous for every given $b \in K$.
\end{df} 

The proof of the following result, known as \emph{Ellis' lemma}, is very short but it has many surprising applications to combinatorics \cite{FK89}. Recently, Sinclair applied it to give a very short proof of the existence of the injective enveloppe of an operator system \cite{Si15}.

Recall that an element $e$ of a semigroup $S$ is \emph{idempotent} if $ee=e$. We define an order relation between idempotents as follows: $e < f$ if and only if $ef=fe=e$.

\begin{thm}[\cite{FK89}] \label{Ellis} Let $K$ be a compact semigroup. Then $K$ contains an idempotent which is minimal for the order relation $<$.
\end{thm}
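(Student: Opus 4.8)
The plan is to prove the result in two stages: first, that $K$ contains at least one idempotent, and second, that among all idempotents one can be chosen minimal. Throughout, the essential constraint to respect is that the hypothesis only guarantees continuity of the \emph{right translations} $\rho_b \colon a \mapsto ab$ for fixed $b$; left translations need not be continuous. Consequently I would phrase every topological argument in terms of images under maps of the form $x \mapsto xb$, and organize the second stage around \emph{left} ideals (which are stable under such images) rather than right ideals.

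For the first stage, I would apply Zorn's lemma to the family of nonempty closed subsemigroups of $K$, ordered by reverse inclusion. A chain of nonempty closed subsemigroups is a family of compact sets with the finite intersection property, so its intersection is nonempty, and an intersection of subsemigroups is again a subsemigroup; thus a minimal nonempty closed subsemigroup $S$ exists. Fixing $a \in S$, the set $Sa$ is the image of $S$ under the continuous map $\rho_a$, hence compact; it is contained in $S$ and is a subsemigroup since $(s_1a)(s_2a) = (s_1 a s_2)a \in Sa$, so minimality forces $Sa = S$. In particular $a \in Sa$, so $T := \{s \in S : sa = a\}$ is nonempty; it is closed (a preimage of $\{a\}$ under $\rho_a$) and a subsemigroup, hence $T = S$ by minimality. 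Since $a \in S = T$, we get $aa = a$, i.e. $a$ is idempotent.

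For the second stage, I would first produce a minimal \emph{closed left ideal} $L$ (a nonempty closed $L$ with $KL \subseteq L$): again Zorn applies, using that $K$ itself is such an ideal, that intersections of chains are nonempty by compactness, and that $K \bigcap_i L_i \subseteq \bigcap_i L_i$. Since $L$ is a closed subsemigroup (as $LL \subseteq KL \subseteq L$), the first stage provides an idempotent $e \in L$. I then claim $e$ is minimal. Given an idempotent $f < e$, so that $ef = fe = f$, I would first note that $Ke$ is a nonempty closed left ideal contained in $L$ and containing $e = ee$, whence $Ke = L$; then $f = fe \in Ke = L$. Likewise $Lf$ is a nonempty closed left ideal contained in $L$ and containing $f = ff$, so $Lf = L$, and therefore $e \in L = Lf$ gives $e = \ell f$ for some $\ell \in L$. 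Multiplying by $f$ on the right yields $ef = \ell f f = \ell f = e$; but $ef = f$ by hypothesis, so $e = f$, contradicting $f < e$. Hence no idempotent lies strictly below $e$.

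The main obstacle to watch is exactly the one-sidedness of the topology: since left multiplication may be discontinuous, sets such as $aS$ or $eKe$ need not be closed, so the naive symmetric arguments fail. Keeping all constructions on the side where translations are continuous — minimal closed subsemigroups for the idempotent, minimal closed left ideals for minimality — is what makes the compactness/Zorn steps go through. The only other recurring point requiring care is verifying at each Zorn step that the relevant intersections are nonempty, which is precisely where compactness (via the finite intersection property) is used.
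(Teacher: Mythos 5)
Your proof is correct. Note that the paper itself gives no proof of this statement at all: it is quoted as a known result with a citation to [FK89], so there is nothing internal to compare against. Your two-stage argument --- Zorn's lemma on nonempty closed subsemigroups to extract a minimal one $S$, the identity $Sa = S$ and the closed subsemigroup $\{s \in S : sa = a\}$ to force $aa = a$ (the Ellis--Namakura lemma), followed by Zorn's lemma on closed left ideals and the observation that any idempotent in a minimal closed left ideal is minimal for the order $<$ --- is precisely the classical proof found in the cited reference, and you correctly keep every topological step on the side of the right translations $a \mapsto ab$, which is the only continuity the hypothesis provides. Your minimality step is also clean: from $f < e$ you get $f = fe \in Ke = L$ by minimality of $L$, then $Lf = L$ gives $e = \ell f$, hence $e = ef = f$. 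The one hypothesis you use silently is that $K$ is Hausdorff (compact images are closed, $\{a\}$ is closed); this is the standing convention in [FK89] and in this paper (the compact convex semigroups here live in locally convex Hausdorff spaces, e.g.\ $\mathrm{UCP}(M)$ with the pointwise weak$^*$ topology), so it is not a gap, but it is worth stating explicitly. As a bonus, your argument in fact establishes one direction of the paper's subsequent remark, namely that any idempotent lying in a minimal closed left ideal $Ke$ is minimal.
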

\begin{rem}
It is also shown in \cite{FK89} that an idempotent $e \in K$ is minimal if and only if $Ke$ is a minimal closed left ideal.
\end{rem}

\begin{df}
A \emph{compact convex semigroup} $K$ is a non-empty compact convex space (in some locally convex topological vector space) equipped with a semigroup operation such that $K \ni a \mapsto ab \in K$ is affine and continuous for every given $b \in K$.
\end{df}

\begin{example}
Let $M$ be a von Neumann algebra and let $\mathrm{UCP}(M)$ be the set of all UCP maps from $M$ into $M$, equipped with the topology of pointwise weak$^*$ convergence. Then $\mathrm{UCP}(M)$ is a compact convex semigroup.
\end{example}

We now strengthen the conclusion of Theorem \ref{Ellis} for compact convex semigroups.

\begin{thm} \label{minimal idempotent}
Let $K$ be a compact convex semigroup. Let $e \in K$ be a minimal idempotent. Then we have $exe=e$ for all $x \in K$.
\end{thm}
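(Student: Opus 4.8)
The plan is to fix an arbitrary $x \in K$, set $g = exe$, and prove that $g = e$; since $exe$ ranges over all of $eKe$ this is exactly the assertion. The guiding principle throughout is to use only the right multiplications $R_b : a \mapsto ab$, which are the affine continuous maps we are handed, together with Ellis' lemma (Theorem \ref{Ellis}) as a black box producing idempotents. I deliberately never take a limit or a convex combination in the left variable.

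First I would introduce the compact convex subsemigroup $L = Ke = R_e(K)$: it is compact as a continuous image of $K$, convex because $R_e$ is affine, and closed under multiplication because it is a left ideal; moreover every $v \in L$ satisfies $ve = v$, and $g = (ex)e \in L$. For this fixed $g$ I would then consider the fixed-point set $F_g = \{ v \in L : vg = v \}$ of the affine continuous self-map $R_g|_L : L \to L$ (well defined since $g \in L$ and $L$ is a subsemigroup, so $Lg \subseteq L$). This $F_g$ is closed, hence compact; it is convex because $R_g$ is affine; it is nonempty because a single affine continuous self-map of a nonempty compact convex set has a fixed point (Cesàro averaging, or Markov--Kakutani for one map); and it is a subsemigroup because $v_1, v_2 \in F_g$ gives $(v_1 v_2) g = v_1 (v_2 g) = v_1 v_2$. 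Thus $F_g$ is a compact convex semigroup, and Ellis' lemma hands me an idempotent $q \in F_g$, so that $q^2 = q$, $qg = q$, and $q \in L$ (hence $qe = q$).

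The decisive step is to form $h = eq$ and observe that it is an idempotent lying below $e$. Indeed, using $qe = q$ and $q^2 = q$ one computes $h^2 = eqeq = e(qe)q = eqq = eq = h$, while $eh = eeq = eq = h$ and $he = eqe = e(qe) = eq = h$; thus $h$ is idempotent with $eh = he = h$, i.e.\ $h < e$. Minimality of $e$ then forces $h = e$, that is $eq = e$. Finally, since $g = exe$ satisfies $eg = g$, I get $g = eg = (eq)g = e(qg) = eq = e$, the middle equalities using $e = eq$, associativity, $qg = q$ and $eq = e$ in turn. Hence $g = e$, as required.

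The conceptual obstacle, and the reason this is genuinely stronger than Ellis' lemma, is the one-sided continuity hypothesis: left multiplications $a \mapsto ba$ are neither affine nor continuous, so the naive attempts to average a minimal idempotent symmetrically, or to extract an idempotent from the closure of $\{g^n\}$, collapse because one cannot pass to a limit in the left factor. The argument above circumvents this entirely: the element $g$ enters only through $R_g$, Ellis' lemma produces $q$ with no recourse to convexity, and convexity is invoked solely to make $F_g$ nonempty and stable. The step I expect to demand the most care is verifying that $F_g$ is at once nonempty, convex, and multiplicatively closed, since all three are needed before Theorem \ref{Ellis} may legitimately be applied to it.
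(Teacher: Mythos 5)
Your proof is correct, and it takes a genuinely different route from the paper's. The paper works inside the minimal closed left ideal $I=Ke$ (using the characterization, quoted from \cite{FK89}, that $e$ is a minimal idempotent if and only if $Ke$ is a minimal closed left ideal): it forms the closed left ideal $J=\{\tfrac{1}{2}(bxe+b)\mid b\in I\}\subseteq I$, concludes $J=I$ by minimality, observes that any extreme point $a$ of $I$, once written as $a=\tfrac{1}{2}(bxe+b)$, must satisfy $a=bxe=b$ and hence $axe=a$, and then invokes Krein--Milman to propagate $axe=a$ to all of $I$, so in particular $exe=e$. There, convexity enters through the averaging trick and through extreme points. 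Your argument instead uses minimality only through the order on idempotents: you manufacture an idempotent $q$ with $qg=q$ and $qe=q$ (Markov--Kakutani for a single affine map makes the fixed-point set $F_g$ nonempty, and a second application of Ellis' lemma produces $q$), verify that $h=eq$ is an idempotent with $eh=he=h$, conclude $eq=e$ by minimality, and unwind $g=eg=e(qg)=eq=e$. All your verifications are sound: $L=Ke$ is a compact convex subsemigroup with $ve=v$ for every $v\in L$; $F_g$ is closed, convex, multiplicatively closed, and nonempty; and the identities $h^2=h$, $eh=he=h$ follow from $qe=q$ and $q^2=q$ exactly as you compute. As for what each approach buys: the paper's proof is shorter, but it leans on the minimal-ideal characterization of minimality (imported from \cite{FK89} in a remark) and on Krein--Milman, hence genuinely uses local convexity of the ambient space; yours needs neither, substituting the one-map fixed point theorem (valid in any Hausdorff topological vector space, no local convexity required) and a second invocation of Ellis' lemma, at the cost of a somewhat longer argument.
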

\begin{proof}
Let $I=Ke$ be the minimal closed left ideal generated by $e$. Take any $x \in K$ and let $J=\{ \frac{1}{2}(bxe+b) \mid b \in I \}$. Then $J \subset I$ and $J$ is a closed left ideal of $K$. By minimality of $I$, we must have $J=I$. Let $a$ be an extremal point of the convex compact space $I$. Since $J=I$, we can find $b \in I$, such that $a=\frac{1}{2}(bxe+b)$. Because $a$ is extremal, this forces $a=bxe=b$. Thus $axe=a$. Since this holds for every extremal point $a \in I$, and since $I$ is the closed convex hull of its extremal points (Krein-Millmann theorem), we conclude that $axe=a$ for all $a \in I$. In particular, we have $exe=e$ as we wanted.
\end{proof}

Recall that if $A$ is a unital $C^*$-algebra and $\Phi : A \rightarrow A$ is an idempotent UCP map, then $\Phi(A)$ equipped with the Choi-Effros product $(x,y) \mapsto \Phi(xy)$ is a $C^*$-algebra \cite[Theorem 3.1]{CE77}. By combining this fact with Theorem \ref{minimal idempotent}, one can associate to each compact convex semigroup of UCP maps on a von Neumann algebra a canonical ``boundary" $C^*$-algebra. See \cite{Si15} for examples.

\begin{cor}
Let $M$ be a von Neumann and $K \subset \mathrm{UCP}(M)$ a closed convex semigroup. Let $\Phi$ and $\Psi$ be two minimal idempotents. Then $\Psi|_{\Phi(M)} : \Phi(M) \rightarrow \Psi(M)$ is an isomorphism of $C^*$-algebras with inverse $\Phi|_{\Psi(M)}$.
\end{cor}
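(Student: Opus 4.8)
The plan is to show that $\alpha:=\Psi|_{\Phi(M)}$ and $\beta:=\Phi|_{\Psi(M)}$ are mutually inverse, and then to promote this bijection to a $*$-isomorphism of the two Choi--Effros $C^*$-algebras by a pure positivity argument.

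First I would apply Theorem \ref{minimal idempotent} to the two minimal idempotents $\Phi,\Psi\in K$: taking $e=\Phi$ and $x=\Psi$ gives $\Phi\Psi\Phi=\Phi$, and symmetrically $\Psi\Phi\Psi=\Psi$. Reading the semigroup product as composition of maps, these say $\Phi\circ\Psi\circ\Phi=\Phi$ and $\Psi\circ\Phi\circ\Psi=\Psi$. For $a\in\Phi(M)$ one has $a=\Phi(a)$, hence $\beta(\alpha(a))=\Phi(\Psi(a))=\Phi(\Psi(\Phi(a)))=a$. Thus $\beta\circ\alpha=\mathrm{id}_{\Phi(M)}$, and symmetrically $\alpha\circ\beta=\mathrm{id}_{\Psi(M)}$, so $\alpha$ is a bijection with inverse $\beta$.

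Next I would record the structure of $\alpha$ and $\beta$. Both are unital, since $\Phi(1)=\Psi(1)=1$ is the common unit of all the Choi--Effros algebras, and both are completely positive, being restrictions of the UCP maps $\Psi$ and $\Phi$. By \cite{CE77}, the involution, norm and matrix order of the Choi--Effros $C^*$-algebra $(\Phi(M),\;x\cdot_\Phi y=\Phi(xy))$ coincide with those inherited from $M$, and likewise for $\Psi(M)$. Hence $\alpha$ is a unital complete order isomorphism of these two $C^*$-algebras whose inverse $\beta$ is again completely positive, and it only remains to check that $\alpha$ is multiplicative. For this I would run the Kadison--Schwarz/multiplicative-domain argument inside the abstract $C^*$-algebras: since $\alpha$ is unital and $2$-positive, $\alpha(x)^*\cdot_\Psi\alpha(x)\le\alpha(x^*\cdot_\Phi x)$ for every $x\in\Phi(M)$; applying the same inequality to the unital $2$-positive map $\beta$ at the point $\alpha(x)$ and using $\beta\circ\alpha=\mathrm{id}$ gives $x^*\cdot_\Phi x\le\beta(\alpha(x)^*\cdot_\Psi\alpha(x))$, and applying the positive map $\alpha$ together with $\alpha\circ\beta=\mathrm{id}$ yields the reverse inequality $\alpha(x^*\cdot_\Phi x)\le\alpha(x)^*\cdot_\Psi\alpha(x)$. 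Therefore $\alpha(x^*\cdot_\Phi x)=\alpha(x)^*\cdot_\Psi\alpha(x)$ for all $x$ (and, replacing $x$ by $x^*$, also the $xx^*$ version), so the multiplicative domain of $\alpha$ is all of $\Phi(M)$. Thus $\alpha$ is a unital $*$-homomorphism, and being bijective it is a $*$-isomorphism with inverse $\beta$.

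The only genuinely delicate point is the compatibility invoked above: that the order governing the Kadison--Schwarz inequality in the $C^*$-algebras $\Phi(M)$ and $\Psi(M)$ is exactly the operator-system order inherited from $M$, with respect to which $\alpha$ and $\beta$ are completely positive. Given \cite{CE77} this is automatic, so the multiplicativity step becomes a formal manipulation; fixing the composition convention in the first step is the only other bookkeeping to keep straight, and it is harmless since the relevant words $\Phi\Psi\Phi$ and $\Psi\Phi\Psi$ are palindromes.
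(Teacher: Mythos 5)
Your proposal is correct and follows essentially the same route as the paper: apply Theorem \ref{minimal idempotent} to get $\Phi\circ\Psi\circ\Phi=\Phi$ and $\Psi\circ\Phi\circ\Psi=\Psi$, deduce that the restrictions are mutually inverse UCP maps, and conclude they are $C^*$-isomorphisms. The only difference is that you spell out the standard fact the paper invokes in one sentence — that a unital completely positive bijection with completely positive inverse between (Choi--Effros) $C^*$-algebras is a $*$-isomorphism, via Kadison--Schwarz and the multiplicative domain — which is a fine, if optional, elaboration.
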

\begin{proof}
By Theorem \ref{minimal idempotent}, we have $\Psi \circ \Phi \circ \Psi=\Psi$ and $\Phi \circ \Psi \circ \Phi=\Phi$. Terefore $\Psi|_{\Phi(M)}$ and $\Phi|_{\Psi(M)}$ are UCP maps which are inverse to each others. Thus they must be isomorphisms of $C^*$-algebras.
\end{proof}

\begin{prop} \label{faithful idempotent}
Let $M$ be a von Neumann and $K \subset \mathrm{UCP}(M)$ a closed convex semigroup. Let $M^K=\{ x \in M \mid \forall \Psi \in K, \: \Psi(x)=x \}$. Let $\Phi \in K$ be a minimal idempotent. If $\Phi$ is faithful, then $M^K$ is a $C^*$-subalgebra of $M$ and $\Phi$ is a conditional expectation of $M$ onto $M^K$.
\end{prop}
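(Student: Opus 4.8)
The plan is to prove both assertions at once by identifying $\Phi(M)$ with $M^K$ and, along the way, showing that $\Phi(M)$ sits inside the multiplicative domain of $\Phi$. Throughout I write $B=\Phi(M)$, and I use that, since $\Phi$ is idempotent, $B=\{x\in M\mid \Phi(x)=x\}$; in particular $M^K\subseteq B$ because $\Phi\in K$. The only place minimality of $\Phi$ enters is through Theorem \ref{minimal idempotent}, which gives $\Phi\circ\Psi\circ\Phi=\Phi$ for every $\Psi\in K$.

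The first step uses faithfulness to show that every $a\in B$ lies in the multiplicative domain of $\Phi$. For $a\in B$ the Kadison--Schwarz inequality together with $\Phi(a)=a$ gives $d:=\Phi(a^*a)-a^*a\ge 0$, while idempotency yields $\Phi(d)=\Phi(\Phi(a^*a))-\Phi(a^*a)=0$. Faithfulness of $\Phi$ then forces $d=0$, i.e. $\Phi(a^*a)=a^*a$; applying this to $a^*\in B$ as well gives $\Phi(aa^*)=aa^*$, so $a$ is in the multiplicative domain. The standard multiplicative-domain identities $\Phi(ax)=a\Phi(x)$ and $\Phi(xa)=\Phi(x)a$ for $a\in B$, $x\in M$, then show that $B$ is closed under the product of $M$ (as $\Phi(ab)=ab$ for $a,b\in B$) and that $\Phi(axb)=a\Phi(x)b$. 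Since $B$ is also norm-closed (as $\ker(\Phi-\id)$ with $\Phi$ contractive) and self-adjoint, this already proves that $B$ is a $C^*$-subalgebra and that $\Phi\colon M\to B$ is a conditional expectation.

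The second step proves $B\subseteq M^K$, which gives $M^K=B$ and finishes the proof. Fix a self-adjoint $a\in B$ and $\Psi\in K$, and set $b=\Psi(a)$, which is again self-adjoint. From $\Phi\circ\Psi\circ\Phi=\Phi$ and $a\in B$ I get $\Phi(b)=\Phi(\Psi(\Phi(a)))=\Phi(a)=a$, and likewise $\Phi(\Psi(a^2))=a^2$ using $a^2\in B$. I then sandwich $\Phi(b^2)$: the Schwarz inequality for $\Phi$ gives $\Phi(b^2)\ge \Phi(b)^2=a^2$, while the Schwarz inequality for $\Psi$ gives $b^2=\Psi(a)^2\le\Psi(a^2)$ and hence $\Phi(b^2)\le\Phi(\Psi(a^2))=a^2$; therefore $\Phi(b^2)=a^2$. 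Using the module property from the first step I compute $\Phi((b-a)^2)=\Phi(b^2)-a\Phi(b)-\Phi(b)a+a^2=a^2-a^2-a^2+a^2=0$, so faithfulness applied to the positive element $(b-a)^2$ gives $b=a$, i.e. $\Psi(a)=a$. By linearity (splitting into real and imaginary parts) this holds for all $a\in B$ and all $\Psi\in K$, whence $B\subseteq M^K$.

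I expect the second step to be the delicate one. The identity $\Phi(a^*a)=a^*a$ of the first step is almost immediate once one notices the idempotency trick $\Phi(d)=0$, whereas passing from ``$\Phi$ fixes $a$'' to ``$\Psi$ fixes $a$'' genuinely requires both the two-sided Schwarz sandwich and the module property established earlier; it is precisely here that the faithfulness hypothesis does the essential work.
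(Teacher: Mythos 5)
Your proof is correct and takes essentially the same route as the paper: the first step (Kadison--Schwarz plus idempotency plus faithfulness to place $\Phi(M)$ in the multiplicative domain, hence a $C^*$-subalgebra with $\Phi$ a conditional expectation) is identical, and your second step uses the same ingredients --- Theorem \ref{minimal idempotent} in the form $\Phi\circ\Psi\circ\Phi=\Phi$, the module property, and faithfulness --- except that you run the Schwarz sandwich on self-adjoint elements of $\Phi(M)$ where the paper runs it on unitaries. Your write-up merely makes explicit the computation the paper compresses into ``Since $\Phi$ is a faithful conditional expectation, this forces $\Psi(u)=u$.''
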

\begin{proof}
For every $x \in \Phi(M)$, we have $\Phi(x^*x) \geq \Phi(x^*)\Phi(x)=x^*x$. But we have $\Phi(\Phi(x^*x) - x^*x) =0$. Thus $\Phi(x^*x)=x^*x$ because $\Phi$ is faithful. In particular, $x^*x \in \Phi(M)$ for all $x \in \Phi(M)$. By the polarization identity, this means that $\Phi(M)$ is a subalgebra of $M$. Then $\Phi$ is a faithful conditional expectation onto $\Phi(M)$. Now, take a unitary $u \in \Phi(M)$. For every $\Psi \in K$, we have $\Phi(\Psi(u))=u$ by Theorem \ref{minimal idempotent}. Since $\Phi$ is a faithful conditional expectation, this forces $\Psi(u)=u$. Thus $u \in M^K$. Since this holds for every unitary $u \in \Phi(M)$, we conclude that $\Phi(M)=M^K$.
\end{proof}

\section{The weak relative Dixmier property}

Let $N \subset M$ be an inclusion of von Neumann algebras. Let $\mathcal{D}(N,M) \subset \mathrm{UCP}(M)$ be the closed convex semigroup generated by $\Ad(u)$ for all $u \in \mathcal{U}(N)$. Note that $M^{\mathcal{D}(N,M)}=N' \cap M$. Observe that the inclusion $N \subset M$ has the weak relative Dixmier property if and only if $\mathcal{D}(N,M)$ contains a conditional expectation onto $N' \cap M$.

We need the following key lemma which relies on \cite[Theorem D]{Ma18}.
\begin{lem} \label{dixmier III1}
Let $N$ be a type $\III_1$ factor with separable predual. Let $\varphi$ be a faithful normal state on $N$. Then $\varphi \in \mathcal{D}(N,N)$.
\end{lem}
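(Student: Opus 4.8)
The plan is to realise $\varphi$, viewed as the idempotent UCP map $P_\varphi : x \mapsto \varphi(x)\,1$, as an element of $\mathcal{D}(N,N)$. Rather than attacking $\varphi$ directly, I would first produce one carefully chosen faithful normal state inside $\mathcal{D}(N,N)$ by the minimal-idempotent machinery, and then transport it to $\varphi$ using transitivity. To set up the good state, I would invoke \cite[Theorem D]{Ma18} to fix an irreducible copy $R \subset N$ of the hyperfinite ${\rm II}_1$ factor that is the range of a faithful normal conditional expectation $E : N \to R$, so that $R' \cap N = \C$. Writing $\tau$ for the trace of $R$, set $\varphi_0 := \tau \circ E$. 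This is a faithful normal state, and since $E$ is $R$-bimodular and $\tau$ is tracial, $\varphi_0 \circ \Ad(u) = \varphi_0$ for every $u \in \mathcal{U}(R)$. Hence $R \subseteq N_{\varphi_0}$, and therefore $N_{\varphi_0}' \cap N \subseteq R' \cap N = \C$: the centralizer of $\varphi_0$ is irreducible in $N$.

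Next I would show $P_{\varphi_0} \in \mathcal{D}(N,N)$. Consider the closed convex semigroup $K := \mathcal{D}(R,N) \subseteq \mathcal{D}(N,N)$ generated by $\{\Ad(u) : u \in \mathcal{U}(R)\}$, whose fixed-point algebra is $N^K = R' \cap N = \C$. Because every generator $\Ad(u)$ with $u \in \mathcal{U}(R)$ preserves $\varphi_0$, so does every element of $K$ (convex combinations and weak$^*$ limits preserve the normal state $\varphi_0$). By Theorem \ref{Ellis} there is a minimal idempotent $\Phi \in K$. Since $\Phi$ is an idempotent UCP map with $\varphi_0 \circ \Phi = \varphi_0$, the Kadison--Schwarz computation from the proof of Proposition \ref{faithful idempotent} applies: for $x \in \Phi(N)$ one has $\Phi(x^*x) - x^*x = \Phi(x^*x) - \Phi(x)^*\Phi(x) \geq 0$ while $\varphi_0(\Phi(x^*x) - x^*x) = 0$, so faithfulness of $\varphi_0$ gives $\Phi(x^*x) = x^*x$; the same inequality shows $\Phi$ is faithful. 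Thus $\Phi$ is a faithful minimal idempotent, and Proposition \ref{faithful idempotent} yields $\Phi(N) = N^K = \C$. Consequently $\Phi = P_\psi$ for a state $\psi$, and $\varphi_0 \circ \Phi = \varphi_0$ forces $\psi = \varphi_0$, so that $P_{\varphi_0} = \Phi \in \mathcal{D}(N,N)$.

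Finally I would transport this to an arbitrary faithful normal $\varphi$. Let $\mathcal{S}$ be the set of faithful normal states $\psi$ with $P_\psi \in \mathcal{D}(N,N)$. It is convex and norm-closed, and it is invariant under $\psi \mapsto \psi \circ \Ad(u)$ for $u \in \mathcal{U}(N)$, since $P_{\psi \circ \Ad(u)} = P_\psi \circ \Ad(u)$ again lies in the semigroup $\mathcal{D}(N,N)$. By the Connes--Størmer transitivity theorem for type $\III_1$ factors with separable predual \cite{CS76}, the inner orbit $\{\varphi_0 \circ \Ad(u) : u \in \mathcal{U}(N)\}$ is norm-dense in the faithful normal states. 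As $\mathcal{S}$ is norm-closed and contains this orbit, it contains every faithful normal state, and in particular $\varphi \in \mathcal{S}$, i.e.\ $\varphi \in \mathcal{D}(N,N)$.

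The two genuinely deep inputs are external: the existence of the irreducible finite AFD subfactor with expectation (\cite[Theorem D]{Ma18}), without which the state $\varphi_0$ cannot be manufactured, and the Connes--Størmer transport, which is what allows passage from one state to all of them. The crux of the \emph{new} argument, and the step I expect to require the most care, is the middle one: forcing the minimal idempotent $\Phi$ — a priori merely a UCP idempotent — to be a faithful conditional expectation onto the scalars. This is exactly where $\varphi_0$-invariance of the whole semigroup $K$ (guaranteed by $R \subseteq N_{\varphi_0}$) feeds into Proposition \ref{faithful idempotent}; the averaging furnished by Theorem \ref{Ellis} and Theorem \ref{minimal idempotent} is comparatively soft once that invariance is in place, but getting the semigroup $K$ and the state $\varphi_0$ to be compatible in this way is the heart of the matter.
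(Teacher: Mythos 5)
Your argument breaks down at its very first step: the input you attribute to \cite[Theorem D]{Ma18} is not what that theorem provides. Theorem D of \cite{Ma18} produces an irreducible subfactor with expectation $P \subset N$ where $P$ is AFD of \emph{type} $\III_1$, not a copy of the hyperfinite $\II_1$ factor. The difference is not cosmetic; it is the entire difficulty of the lemma. If an irreducible $\II_1$ subfactor with expectation $R \subset N$ existed, then, exactly as you compute, $\varphi_0 = \tau \circ E$ would be a faithful normal state with $N_{\varphi_0}' \cap N \subseteq R' \cap N = \C$; but by Haagerup's theorem \cite{Ha85}, the existence of a faithful normal state with irreducible centralizer is \emph{equivalent} to $N$ having trivial bicentralizer, which---as the introduction of this paper recalls---is a deep open problem for type $\III_1$ factors. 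So your first step silently assumes a statement at least as strong as the bicentralizer conjecture for $N$. Note also that no state of the form you want can have $P$ (from the actual Theorem D) in its centralizer: centralizers carry a faithful normal trace, hence are finite, while $P$ is of type $\III$. Everything after your first step is correct, but it is downstream of an unavailable hypothesis.

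The paper's proof is designed precisely to get around this obstruction. Since $P$ is type $\III_1$, one cannot form $\tau \circ E$; instead the paper (i) chooses a faithful normal state $\psi$ such that $P$ is globally $\sigma^\psi$-invariant and $P_\psi' \cap P = \C$, i.e.\ the centralizer is irreducible only \emph{inside} $P$ (this is available because the AFD type $\III_1$ factor admits such states, e.g.\ tensor products of Powers states); (ii) uses AFD-ness together with irreducibility of $P$ in $N$ to produce a \emph{state} $\phi \in \mathcal{D}(P,N)$ by Schwartz-type averaging over finite-dimensional subalgebras---this is where amenability enters, and it is telling that your argument never uses it, because in your setting the trace does all the work; (iii) recovers $\psi$ as $\phi \circ E^\psi_{P_\psi' \cap N}$, where $E^\psi_{P_\psi' \cap N} \in \mathcal{D}(P_\psi, N)$ by tracial averaging, since $P_\psi$ genuinely centralizes $\psi$; and (iv) finishes with Connes--St\o rmer transitivity, as you do in your final step, which is fine. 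Incidentally, your middle step is also heavier than necessary: once $R \subseteq N_{\varphi_0}$, the elementary averaging of \cite[Lemma 1.2]{Po99} already places a conditional expectation onto $R' \cap N = \C$ inside $\mathcal{D}(R,N)$, with no need for Theorem \ref{Ellis} or Proposition \ref{faithful idempotent}; the Ellis machinery of this paper is needed in the proof of the main theorem, not in this lemma. But neither your middle step nor the simpler one can be run until a subfactor of the kind you postulate exists, and whether it exists is exactly the open problem.
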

\begin{proof}
By \cite[Theorem D]{Ma18}, $N$ contains an irreducible subfactor with expectation $P \subset N$ such that $P$ is AFD of type $\III_1$. Let $\psi$ be a faithful normal state on $N$ such that $P$ is globally invariant by $\sigma^\psi$ and $P_\psi' \cap P=\C$. Since $P$ is AFD and irreducible in $N$, we know that $\mathcal{D}(P,N)$ contains a state $\phi$. Note that $\phi=\phi \circ E^\psi_P$. Thus, we have $\phi \circ E^\psi_{P_\psi' \cap N}=\psi$. Since $P_\psi$ centralizes $\psi$, we have $E^\psi_{P_\psi' \cap N} \in \mathcal{D}(P_\psi,N)$. We conclude that $\psi=\phi \circ E^\psi_{P_\psi' \cap N} \in \mathcal{D}(P,N) \subset \mathcal{D}(N,N)$. Finally, by Connes-St\o rmer transitivity theorem, we get $\varphi \in \mathcal{D}(N,N)$. 
\end{proof}

We are now ready to prove our main theorem.

\begin{proof}[Proof of the main theorem] First note that we can always write $N$ as an increasing union of a net of subalgebras $(N_i)_{i \in I}$ with separable predual which are globally invariant under $\sigma^\varphi$. Thus, we may assume that $N$ has a separable predual. Then, we may also assume that $M$ has a separable predual. Since $\mathcal{Z}(N)$ has the weak relative dixmier property in $M$, we may replace $M$ by $\mathcal{Z}(N)' \cap M$ and reduce to the case where $\mathcal{Z}(N) \subset \mathcal{Z}(M)$. Now by using the desintegration theory, we may assume that $N$ is a factor. If $N$ is semifinite, the result is already known. So we may assume that $N$ is a type $\III$ factor. If $N$ is not of type $\III_1$, then $N \cong N_0 \rtimes \Z$ for some semifinite von Neumann algebra with expectation $N_0 \subset N$. Since $N_0$ has the weak relative dixmier property in $M$ and $\Z$ is amenable, then $N$ also has the weak relative dixmier property in $M$. 

Finally, we deal with the case where $N$ is of type $\III_1$. By compactness of $\mathcal{D}(N,M)$, any element of $\mathcal{D}(N,N)$ can be extended to an element of $\mathcal{D}(N,M)$. In particular, thanks to Lemma \ref{dixmier III1}, we can find $\Psi \in \mathcal{D}(N,M)$ such that $\Psi |_N=\varphi$. Take $\Phi_0$ a minimal idempotent in $\mathcal{D}(N,M)$ and let $\Phi=\Phi_0 \circ \Psi$ which is again a minimal idempotent of $\mathcal{D}(N,M)$. Observe that $E$ commutes with every element of $\mathcal{D}(N,M)$. Thus, we have 
$$ E \circ \Phi = \Phi \circ E= \Phi_0 \circ \Psi \circ E=\Phi_0 \circ \varphi= \varphi.$$
In particular, $\Phi$ is faithful and by Proposition \ref{faithful idempotent}, we conclude that $\Phi$ is a conditional expectation on $N' \cap M$ (in fact $\Phi$ is the unique $\varphi$-preserving conditional expectation onto $N' \cap M$).
\end{proof}

\begin{rem}
Let $M \subset \B(H)$ be a von Neumann algebra. Let $\Phi \in \mathcal{D}(M,\B(H))$ be a minimal idempotent. Then $A_\Phi=\Phi(\B(H))$ (whose isomorphism class as a $C^*$-algebra does not depend on the choice of $\Phi$) contains $M'$. We have $A_\Phi=M'$ if and only if $M$ is injective. In general, $A_\Phi$ contains the injective enveloppe of $M'$ but it is not clear if they are equal or not. Notice that if $M \subset N \subset \B(H)$, then $M \subset N$ has the weak relative Dixmier property if and only if $A_\Phi \cap N=M' \cap N$.
\end{rem}

\bibliographystyle{plain}

\begin{thebibliography}{AHHM18}












\bibitem[CE77]{CE77} {\sc M.-D. Choi and E. G. Effros}, {\it Injectivity and
operator spaces.} J.\ Funct.\ Anal.\  {\bf 24} (1977), 156--209.




\bibitem[Co75]{Co75} {\sc A. Connes}, {\it Classification of injective factors. Cases ${\rm II_1}$, ${\rm II_\infty}$, ${\rm III_\lambda}$, $\lambda \neq 1$.} Ann. of Math. {\bf 74} (1976), 73--115.











\bibitem[FK89]{FK89} {\sc H. Furstenberg, Y. Katznelson}, {\it Idempotents in compact semigroups and Ramsey theory.} Y. Israel J. Math. {\bf 68} (1989) 257--270






\bibitem[Ha85]{Ha85} {\sc U. Haagerup}, {\it Connes' bicentralizer problem and uniqueness of the injective factor of type ${\rm III_1}$.} Acta Math. {\bf 69} (1986), 95--148.








\bibitem[HP17]{HP17} {\sc C. Houdayer, S. Popa}, {\it Singular masas in type ${\rm III}$ factors and Connes' bicentralizer property.} Adv. Stud. in Pure Math. {\bf 80} (2019), 109--122.





















\bibitem[Ma18]{Ma18} {\sc A. Marrakchi}, {\it Full factors, bicentralizer flow and approximately inner automorphisms.} Preprint. {\tt arXiv:1811.10253}.







\bibitem[Po81]{Po81} {\sc S. Popa}, {\it On a problem of R.V.\ Kadison on maximal abelian $\ast$-subalgebras in factors.} Invent. Math. {\bf 65} (1981), 269--281.






\bibitem[Po99]{Po99} {\sc On the relative Dixmier property for inclusions of $C^*$-algebras.} J. of Funct. Anal. {\bf 171} (2000), 139--154. 

\bibitem[Po01]{Po01} {\sc S. Popa}, {\it On a class of type ${\rm II_1}$ factors with Betti numbers invariants.} Ann. of Math. {\bf 163} (2006), 809--899.

\bibitem[Po03]{Po03} {\sc S. Popa}, {\it Strong rigidity of ${\rm II_1}$ factors arising from malleable actions of w-rigid groups $\rm I$.} Invent. Math. {\bf 165} (2006), 369--408.








\bibitem[Sc63]{Sc63} {\sc J. Schwartz}, {\it Two finite, non-hyperfinite, non-isomorphic factors.} Comm. Pure Appl. Math. {\bf 16} (1963), 19--26.


\bibitem[Si15]{Si15} {\sc T. Sinclair}, {\it A very short proof of the existence of the injective enveloppe of an operator space.} \url{http://www.math.purdue.edu/~tsincla/injective-note-1.pdf}.













\end{thebibliography}

\end{document}